\RequirePackage{fix-cm}
\documentclass[smallextended]{svjour3}
\smartqed
\usepackage{graphicx}
\usepackage{amsmath,amssymb,amsfonts,latexsym}
\providecommand{\diff}{\mathop{}\!\mathrm{d}}
\newcommand\cL{{\cal L}}
\newcommand{\bbr}{\mathbb{R}}
\newcommand{\bbp}{\mathbb{P}}
\newcommand{\bbe}{\mathbb{E}}
\newcommand{\eps}{\varepsilon}
\usepackage{enumitem}

\begin{document}

\title{Existence of a classical solution to the integro-differential equation arising in the Cram\'er--Lundberg non-life insurance model with proportional investment}

\titlerunning{Existence of a classical solution to the IDE in the Cram\'er--Lundberg}

\author{Platon Promyslov}


\institute{P. Promyslov \at
              \email{platon.promyslov@gmail.com}           
}

\date{Received: date / Accepted: date}

\maketitle

\begin{abstract}
This paper studies the classical Cram\'er--Lundberg non-life insurance model in which an insurance company invests a fixed proportion of its capital in a risky asset whose price follows a geometric Brownian motion. We prove that the survival probability is a classical $C^2$-smooth solution to the corresponding integro-differential equation under minimal moment conditions: it suffices that $\mathbb{E}[\xi^\varepsilon]<\infty$ for some $\varepsilon>0$ and that the claim size distribution function is continuous. The condition $\mathbb{E}[\xi^{\gamma-1}]<\infty$ is required only to derive the power-law asymptotics $\Psi(u)\sim C_\infty u^{-\gamma+1}$, but not to establish the smoothness of the solution. The method relies on reducing the problem to a Volterra integral equation and iteratively refining asymptotic estimates, thereby avoiding unnecessary assumptions.

\keywords{Ruin probability \and Cram\'er--Lundberg model \and risky investments \and integro-differential equations}
\subclass{60G44 \and 91G05 \and 45D05}
\end{abstract}

\section{Introduction and Main Results}

This paper studies the ruin problem for an insurance company that invests a fixed proportion of its capital in a risky asset. We assume that the company continuously invests a fixed fraction $\kappa \in (0,1]$ of its capital in a risky asset whose price follows a geometric Brownian motion with drift parameter $a$ and volatility $\sigma > 0$. The remaining fraction $1-\kappa$ is placed in a risk-free bank account with interest rate $r \ge 0$. Then the dynamics of the company's capital $X^u = (X^u_t)_{t \ge 0}$ with initial reserve $u>0$ is governed by the stochastic differential equation:
\begin{equation*}
\diff X^u_t = \big( (a-r)\kappa + r \big) X^u_t \diff t + \kappa \sigma X^u_t \diff W_t + \diff P_t,
\end{equation*}
where $W = (W_t)_{t\ge 0}$ is a standard Wiener process. The process $P_t = ct - \sum_{i=1}^{N_t} \xi_i$ describes the insurance business: $c>0$ is the premium arrival rate, $N = (N_t)_{t\ge 0}$ is a Poisson process with intensity $\lambda>0$, and the claim sizes $(\xi_i)_{i \ge 1}$ are independent positive random variables with distribution function $F$.

The central object of study is the infinite-horizon survival probability $\Phi(u) := \bbp(\tau^u = \infty)$, where $\tau^u := \inf\{t \ge 0 : X^u_t \le 0\}$ denotes the ruin time. Assuming sufficient smoothness of $\Phi(u)$, one can show that it must satisfy the second-order integro-differential equation (IDE):
\begin{multline}
\label{eq:ide}
\frac{1}{2} \kappa^2 \sigma^2 u^2 \Phi''(u) + \big( (a-r)\kappa + r \big) u \Phi'(u) + c \Phi'(u) =
\\
= \lambda \int_0^\infty \big( \Phi(u) - \Phi(u-y) \big) \diff F(y),
\end{multline}
with the natural boundary conditions $\lim_{u\to\infty}\Phi(u) = 1$ and $\Phi(u)=0$ for $u<0$, while the value $\Phi(0+)$ is to be determined.

The main mathematical difficulty in this class of problems is to justify that the survival probability possesses sufficient regularity for \eqref{eq:ide} to be well-defined in the classical sense. In \cite{Grandits2004}, it was shown that reducing the problem to an integral equation requires only a minimal moment condition: $\bbe[\xi^\eps] < \infty$ for some $\eps > 0$. However, that approach guaranteed merely absolute continuity of the derivative $\Phi'(u)$, leaving open the question of $C^2$-smoothness.

In the recent work \cite{AK2026}, existence and uniqueness of a $C^2$-smooth solution to \eqref{eq:ide} were established under the condition $\mathbb{E}[\xi^{\gamma-1}]<\infty$, where the structural parameter
\begin{equation*}
\gamma := \frac{2\big((a-r)\kappa + r\big)}{\kappa^2 \sigma^2} > 1
\end{equation*}
governs the balance between drift and diffusion. The condition $\mathbb{E}[\xi^{\gamma-1}]<\infty$ is evidently violated for heavy-tailed distributions, including those studied in \cite{Grandits2004}. Hence, the framework of \cite{AK2026} does not encompass these important cases.

The aim of the present paper is to show that the requirement of a finite moment of order $\gamma-1$ is superfluous for smoothness itself: $C^2$-regularity can be established under the existence of an arbitrarily small moment $\mathbb{E}[\xi^\varepsilon]<\infty$. The condition $\mathbb{E}[\xi^{\gamma-1}]<\infty$ is needed exclusively for deriving the power-law asymptotics $\Psi(u) := 1 - \Phi(u) \sim C_\infty u^{-\gamma+1}$, but not for the regularity of the solution.

Developing the integral approach proposed for the annuity model in \cite{Promyslov2026}, the present work reduces the IDE \eqref{eq:ide} to a Volterra integral equation on the entire half-line $\bbr_+$. By iteratively improving regularity, we prove absolute integrability of the density of the survival probability. This method allows us to avoid artificial local patching of solutions and the use of majorants depending on the condition $\bbe[\xi^{\gamma-1}] < \infty$.

The analysis relies on the following basic assumptions:
\begin{enumerate}[label=(\textbf{A\arabic*})]
    \item \label{assump:1} The distribution function $F$ is continuous at zero: $F(0)=0$.
    \item \label{assump:2} There exists a constant $\eps > 0$ such that $\bbe[\xi^\eps] < \infty$.
    \item \label{assump:3} The inequality $\gamma > 1$ holds; otherwise $\Psi(u) = 1$ (see \cite{Paulsen1993}[Theorem 3.1]).
\end{enumerate}

The main result of the paper is the following theorem.

\begin{theorem}\label{thm:main}
Suppose that conditions \ref{assump:1}--\ref{assump:3} are satisfied. Then:
\begin{enumerate}[label=(\roman*)]
    \item \textbf{(Existence and smoothness)} The survival probability $\Phi(u)$ belongs to the class $C([0,\infty)) \cap C^1((0,\infty))$ and satisfies the IDE \eqref{eq:ide} almost everywhere. If, in addition, $F$ is continuous on $[0,\infty)$, then $\Phi \in C^2((0,\infty))$ and equation \eqref{eq:ide} holds in the classical sense for all $u > 0$.
    
    \item \textbf{(Boundary conditions)} We have $\lim_{u\to\infty} \Phi(u) = 1$. The right-hand limit at zero is strictly positive: $\Phi(0+) = \Phi_0 \in (0,1)$.

    \item \textbf{(Asymptotics)} If, in addition, $\mathbb{E}[\xi^{\gamma-1}]<\infty$, then there exists a constant $C_\infty>0$ such that  
    \[
    \lim_{u\to\infty} u^{\gamma-1}\Psi(u) = C_\infty.
    \]  
    If, on the contrary, $\mathbb{E}[\xi^{\gamma-1}]=\infty$, then  
    \[
    \lim_{u\to\infty} u^{\gamma-1}\Psi(u) = \infty,
    \]
    and in this case $\Psi(u) \sim \operatorname{const} \bar F(u)$ for subexponential distributions.
\end{enumerate}
\end{theorem}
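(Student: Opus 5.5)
The plan is to follow the integral route from \cite{Promyslov2026} rather than working with \eqref{eq:ide} directly. First I would establish the probabilistic facts that need no smoothness. By the strong Markov property at the first claim time, $\Phi(u)=\bbe[\Phi(Y^u_{T_1}-\xi_1)\mathbf 1\{\cdot\}]$, where $Y^u$ is the claim-free diffusion $\diff Y=\big((a-r)\kappa+r\big)Y\diff t+\kappa\sigma Y\diff W+c\diff t$. Since $Y^u_{T_1}$ has a smooth density in $u$, this gives continuity of $\Phi$ on $(0,\infty)$. Continuity at $0+$ and $\Phi(0+)=\Phi_0>0$ I would take from a comparison argument: with positive probability the process first drifts above a level $u_0$ before any claim arrives, and from $u_0$ it survives with positive probability. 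The limit $\lim_{u\to\infty}\Phi(u)=1$ under \ref{assump:2}--\ref{assump:3} I would obtain as in \cite{Grandits2004}, using the representation $X^u_t=e^{R_t}\big(u+\int_0^t e^{-R_s}\diff P_s\big)$ with $R_t$ a Brownian motion with drift. Assumption \ref{assump:3} makes $\int_0^\infty e^{-R_s}\diff P_s$ converge a.s. In this step the moment $\bbe[\xi^\eps]<\infty$ is exactly what guarantees the convergence of the discounted claim sum (the perpetuity). Then $\Psi(u)=\bbp(\inf_t \int_0^t e^{-R_s}\diff P_s\le -u)\to0$, and in fact $\Phi_0<1$ because from $0+$ an immediate claim ruins the process with positive probability.

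Next I would reduce \eqref{eq:ide} to a Volterra equation. I would write $g(u)=\frac12\kappa^2\sigma^2u^2+c\,\cdot$ in the form $\frac12\kappa^2\sigma^2u^2\Phi''+(\ldots)\Phi'$ with integrating factor $p(u)=u^{\gamma}(\ldots)$, namely $\big(m(u)\Phi'(u)\big)'=\lambda m(u)\, \frac{2}{\kappa^2\sigma^2u^2}\int(\Phi(u)-\Phi(u-y))\diff F(y)$, where $m(u)=u^{\gamma}\exp\!\big(-\tfrac{2c}{\kappa^2\sigma^2 u}\big)$. Integrating twice and then interchanging the integrals turns the nonlocal term into $\int_0^u K(u,v)\Phi'(v)\diff v$ with an explicit kernel built from $\bar F$. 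This yields a Volterra equation of the second kind for the density $\phi=\Phi'$ on all of $\bbr_+$. The kernel involves $m(v)/m(u)$, which is superexponentially small as $v\to0$, so the singularity at $0$ is harmless. I would then show that the true $\Phi$ solves this equation by the standard martingale/Itô verification in the weak (a.e.) sense, following \cite{Grandits2004}, which gives absolute continuity of $\Phi'$. The resolvent series then shows that the solution $\phi$ is continuous on $(0,\infty)$, so $\Phi\in C^1$ and \eqref{eq:ide} holds a.e. If $F$ is continuous, the right-hand side $\int(\Phi(u)-\Phi(u-y))\diff F(y)$ is continuous in $u$, since the jump of $\Phi$ at $0$ is integrated against a continuous $F$. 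Solving for $\Phi''$ then gives $C^2$ and the classical equation.

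The main obstacle is to show $\int_0^\infty|\phi|<\infty$ and to obtain decay estimates for $\phi$ using only $\bbe[\xi^\eps]<\infty$, without a $u^{-\gamma}$ majorant. I would iterate: start from the crude bound $\Psi(u)\le Cu^{-\delta}$ for some small $\delta$, obtained from the moment of the perpetuity (order $\min(\eps,\gamma-1)$) via Markov's inequality. Inserting this into the Volterra equation gives $|\phi(u)|\le C u^{-1-\delta'}$, which improves $\Psi$, and so on up to exponent $\min(\eps',\gamma-1)$. Integrability of $\phi$ is what makes the two integrations legitimate on the whole half-line.

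For (iii) I would write $u^{\gamma-1}\Psi(u)$ through the integrated equation as $\int_u^\infty m(u)/m(v)\cdots$ and pass to the limit by dominated convergence. This gives $C_\infty=\lim$ as an explicit integral of the form $\int_0^\infty v^{\gamma-2}\cdot\lambda\int(\Phi(v)-\Phi(v-y))\diff F(y)\diff v$ up to constants. This integral is finite, and positive, iff $\bbe[\xi^{\gamma-1}]<\infty$ (by Fubini, the $\bar F(v)v^{\gamma-2}$ part dominates). When the moment is infinite, Fatou's lemma gives the limit $\infty$. In the subexponential case I would use the same representation together with the known perpetuity tail result, $\bbp(\int e^{-R}\diff P<-u)\sim\text{const}\,\bar F(u)$, to get $\Psi(u)\sim\operatorname{const}\bar F(u)$.
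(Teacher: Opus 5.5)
\textbf{The gap: the step that $\Phi$ solves the equation.} Your plan rests on the sentence in which the true $\Phi$ is shown to solve the Volterra equation ``by the standard martingale/It\^o verification in the weak (a.e.) sense'', and that step does not work as described.

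A verification argument runs in one direction only. Take a function already known to be $C^1$ with locally absolutely continuous derivative, solving \eqref{eq:ide} a.e.\ with the right boundary behaviour; It\^o's formula then identifies it with $\Phi$. Running the argument on $\Phi$ itself means expanding $\Phi(X^u_{t\wedge\tau^u})$ by It\^o's formula, which needs exactly the regularity of $\Phi$ you want to conclude. The martingale property of $\Phi(X^u_{t\wedge\tau^u})$ does not by itself give any regularity of $\Phi$; this direction is usually handled by viscosity methods. If you instead simply import the statement from \cite{Grandits2004}, part (i) reduces to a citation of the very step the paper sets out to prove.

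Your ``main obstacle'' paragraph shows the same circularity from the other side. You bound $\phi$ by inserting perpetuity bounds on $\Psi$ into the Volterra equation, and that presupposes $\phi=\Phi'$. Once $\phi=\Phi'$ is known, $\int_0^\infty\phi=1-\Phi_0$ is automatic from monotonicity, so integrability is free. Before it is known, bounds on $\Psi$ say nothing about the solution of the integral equation.

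\textbf{How the paper proceeds, and how to repair your route.} The paper avoids the circularity by reversing the order:
\begin{enumerate}
\item It constructs the solution $g_1$ of \eqref{eq:volterra_zero} with $\Phi(0+)=1$, by a contraction near $0$ and then Burton's theorem.
\item It proves $g_1\in L^1(\mathbb{R}_+)$ purely analytically. Lemma~\ref{lem:boundedness} uses $J(t)\le Ct^{1-\eps}$. Lemma~\ref{lem:iis} bootstraps $|g_1(u)|\le C(1+u)^{-\delta}$ to $|g_1(u)|\le C'u^{-\delta-\eps}$, using only $\bar F(z)\le\mathbb{E}[\xi^\eps]z^{-\eps}$, until $g_1=O(u^{-1-\eps})$.
\item It normalizes $\Phi(0+)=1/(1+I_1)$.
\item Only then does it apply It\^o's formula, to the \emph{constructed} $G$, which has the needed regularity, to get $G=\Phi$.
\end{enumerate}
This analytic decay bootstrap on the constructed solution, done before any identification with $\Phi$, is the ingredient your proposal is missing.

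Alternatively, your own first-claim decomposition can be turned into a proof of regularity. It says $\Phi=\lambda R_\lambda h$ on $(0,\infty)$, where $h(x)=\int\Phi(x-y)\diff F(y)$ and $R_\lambda$ is the $\lambda$-resolvent of the claim-free diffusion $Y$. The Green function of $Y$ has, up to normalization, the form $\psi_\lambda(x\wedge y)\varphi_\lambda(x\vee y)$. Hence $R_\lambda$ maps bounded measurable $h$ to $C^1$ functions with locally absolutely continuous derivative satisfying $(\lambda-\mathcal{A})R_\lambda h=h$ a.e., where $\mathcal{A}$ is the generator of $Y$. That identity is exactly \eqref{eq:ide} a.e.

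With this in place, the rest of your plan goes through: your continuity argument for $C^2$ (the same as the paper's), your perpetuity proof of $\Phi(\infty)=1$, and your treatment of the finite/infinite moment dichotomy in (iii). The subexponential equivalence remains a citation, as in the paper. The results one can cite for it here, such as \cite{Grandits2004}, concern regularly varying tails rather than arbitrary subexponential ones.
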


In addition to the analytical proof of Theorem \ref{thm:main}, Section \ref{sec:identification} provides an explicit integral formula for the constant $C_\infty$ (see \eqref{eq:c_infty}), which allows one to compute the asymptotics without resorting to numerical schemes that may suffer from discretization errors, in particular when approximating differential operators (cf. the approach in \cite{AK2026}).

\section{Reduction to Integral Equations}

We seek a solution to the IDE \eqref{eq:ide} in the class of functions possessing a continuous (or locally absolutely continuous) first derivative $g(u) := \Phi'(u)$. As shown in \cite[Section 3]{Grandits2004}, applying integration by parts to the Lebesgue--Stieltjes integral on the right-hand side of \eqref{eq:ide}, while taking into account that $\Phi(u)=0$ for $u<0$, allows us to rewrite the equation in terms of the function $g$:
\begin{multline}
\label{eq:ide_g_raw}
\frac{1}{2} \kappa^2 \sigma^2 u^2 g'(u) + \big( (a-r)\kappa + r \big) u g(u) + c g(u) =
\\
= \lambda \Phi(0+) \bar{F}(u) + \lambda \int_0^u g(y) \bar{F}(u-y) \diff y,
\end{multline}
where $\bar{F}(u) := 1 - F(u)$ denotes the tail function of the claim size distribution. At this stage, the value $\Phi(0+)$ remains an unknown positive constant.

We introduce the following structural parameters of the model:
\begin{equation*}
\gamma := \frac{2\big((a-r)\kappa + r\big)}{\kappa^2\sigma^2}, \quad \alpha := \frac{2c}{\kappa^2\sigma^2}, \quad \mu := \frac{2\lambda}{\kappa^2\sigma^2}.
\end{equation*}
Dividing both sides of \eqref{eq:ide_g_raw} by $\kappa^2\sigma^2/2$, we obtain a linear first-order integro-differential equation:
\begin{equation}\label{eq:ide_g}
u^2 g'(u) + (\gamma u + \alpha) g(u) = \mu \Phi(0+) \bar{F}(u) + \mu (Bg)(u),
\end{equation}
where $(Bg)(u) := \int_0^u g(u-y) \bar{F}(y) \diff y = \int_0^u g(y) \bar{F}(u-y) \diff y$ is the convolution operator.

Multiplying both sides of \eqref{eq:ide_g} by the integrating factor $u^{\gamma-2} e^{-\alpha/u}$, one readily verifies that the left-hand side collapses to a total derivative, yielding the equation in the form:
\begin{equation}\label{eq:derivative_form}
\frac{\diff}{\diff u} \left( u^\gamma e^{-\alpha/u} g(u) \right) = u^{\gamma-2} e^{-\alpha/u} \mu \big( \Phi(0+) \bar{F}(u) + (Bg)(u) \big).
\end{equation}

Integrating identity \eqref{eq:derivative_form} over the interval $[0, u]$ and noting that, for any function $g$ bounded in a neighborhood of zero, the exponential decay ensures the limiting relation
\[
\lim_{t\downarrow 0} t^\gamma e^{-\alpha/t} g(t) = 0,
\]
we arrive at a Volterra-type integral equation:
\begin{equation}\label{eq:volterra_zero}
g(u) = \frac{\mu}{u^\gamma e^{-\alpha/u}} \int_0^u t^{\gamma-2} e^{-\alpha/t} \big( \Phi(0+) \bar{F}(t) + (Bg)(t) \big) \diff t, \quad u > 0.
\end{equation}
This integral relation serves as the starting point for the analytical proof of global existence of a solution.

\begin{remark}
Equation \eqref{eq:volterra_zero} has a singularity at $u=0$. Therefore, to prove existence of a solution, we first construct a solution on a small interval $[0,u_0]$ (Lemma \ref{lem:local}) via the contraction mapping principle, and then extend it to $[u_0,\infty)$ (\cite[Theorem 2.1.1]{Burton1983}). The two-stage procedure is also employed in \cite{AK2026}; however, for the extension to the half-line, the authors require the additional condition $\mathbb{E}[\xi^{\gamma-1}]<\infty$. Below we show that this condition is superfluous: it suffices that a moment of arbitrarily small order exists, i.e., $\mathbb{E}[\xi^\varepsilon]<\infty$ for some $\varepsilon>0$. The key ingredient is the iterative improvement of the power-law decay rate of $g(u)$, which does not rely on the convergence of the moment of order $\gamma-1$.
\end{remark}

\section{Solvability and A Priori Estimates}
\label{sec:existence}

Due to the presence of the singular factor $u^{-\gamma} e^{\alpha/u}$ at $u=0$, we first establish the existence of a solution on a sufficiently small interval.

\begin{lemma}[Local Solvability]\label{lem:local}
Under condition \ref{assump:1}, for any $\Phi(0+) > 0$ there exists a sufficiently small $u_0 > 0$ such that the integral equation \eqref{eq:volterra_zero} has a unique continuous solution $g \in C([0, u_0])$ satisfying $g(0) = \lambda \Phi(0+)/c$.
\end{lemma}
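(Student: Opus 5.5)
Write $\Phi_0:=\Phi(0+)>0$ and define the operator
\[
(Tg)(u):=\frac{\mu}{u^\gamma e^{-\alpha/u}}\int_0^u t^{\gamma-2}e^{-\alpha/t}\big(\Phi_0\bar F(t)+(Bg)(t)\big)\diff t,\qquad u\in(0,u_0],
\]
with $(Tg)(0):=\lambda\Phi_0/c=\mu\Phi_0/\alpha$. The plan is to show that $T$ maps $C([0,u_0])$ into itself and is a contraction in the sup norm once $u_0$ is small. The Banach fixed point theorem then gives a unique continuous solution. Uniqueness holds in all of $C([0,u_0])$, since the contraction estimate is global.

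The central tool is the kernel
\[
K(u,t):=\mu\,u^{-\gamma}e^{\alpha/u}t^{\gamma-2}e^{-\alpha/t},\qquad 0<t\le u .
\]
\textbf{Step 1.} Bound its mass. Substituting $s=\alpha/t$ gives $\int_0^u t^{-2}e^{-\alpha/t}\diff t=\alpha^{-1}e^{-\alpha/u}$. Since $t^\gamma\le u^\gamma$ for $\gamma>1$, we get
\[
\int_0^u K(u,t)\diff t\le \mu u^{-\gamma}e^{\alpha/u}u^\gamma\alpha^{-1}e^{-\alpha/u}=\mu/\alpha ,
\]
uniformly in $u$. This bound is not small, but the convolution term supplies the smallness: $|(Bg)(t)|\le t\|g\|_\infty$. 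Hence
\[
\|Tg_1-Tg_2\|_\infty\le \frac{\mu}{\alpha}\,u_0\,\|g_1-g_2\|_\infty ,
\]
which is a contraction once $u_0<\alpha/\mu$.

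\textbf{Step 2.} Show $Tg\in C([0,u_0])$. Continuity on $(0,u_0]$ is immediate, because the integrand is locally integrable and the prefactor is continuous. The expected main obstacle is continuity at $0$, i.e. $(Tg)(u)\to\mu\Phi_0/\alpha$. For this I would show that $K(u,\cdot)$ is an approximate identity concentrating at $t=u$, with total mass tending to $\mu/\alpha$. Concretely, with $s=\alpha/t$,
\[
\int_0^u t^{\gamma-2}e^{-\alpha/t}\diff t=\alpha^{\gamma-1}\int_{\alpha/u}^\infty s^{-\gamma}e^{-s}\diff s\sim \alpha^{\gamma-1}(\alpha/u)^{-\gamma}e^{-\alpha/u}\quad (u\downarrow0),
\]
by the standard incomplete-gamma asymptotics. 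Multiplying by $\mu u^{-\gamma}e^{\alpha/u}$ gives total mass $\to\mu/\alpha$. Moreover, for $t\le u(1-\delta)$,
\[
e^{\alpha/u-\alpha/t}\le e^{-\alpha\delta/u}\to0 ,
\]
so this portion of the mass vanishes. Therefore, for any bounded function $h$ continuous at $0$,
\[
\int_0^u K(u,t)h(t)\diff t\to (\mu/\alpha)\,h(0).
\]

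Apply this with $h=\Phi_0\bar F+Bg$. By \ref{assump:1} $\bar F$ is right-continuous at $0$ with $\bar F(0)=1$, and $|(Bg)(t)|\le t\|g\|_\infty\to0$. Hence $(Tg)(u)\to\mu\Phi_0/\alpha=\lambda\Phi_0/c$. This also shows that the fixed point automatically satisfies $g(0)=\lambda\Phi_0/c$.

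It remains to note that $T$ is affine, so the fixed point theorem applies on the whole space $C([0,u_0])$. The resulting solution satisfies \eqref{eq:volterra_zero} on $(0,u_0]$ and takes the required value at $0$.
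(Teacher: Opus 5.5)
Your proof is correct and follows the paper's route: Banach's fixed point theorem on $C([0,u_0])$, with the smallness coming from $|B(g_1-g_2)(t)|\le t\,\|g_1-g_2\|_\infty$. Your version improves on the paper in two ways: your exact mass bound $\int_0^u K(u,t)\,\diff t\le\mu/\alpha$ replaces the paper's incomplete-gamma asymptotics and gives the explicit radius $u_0<c/\lambda$, and your Step~2 checks the self-mapping property $T\big(C([0,u_0])\big)\subset C([0,u_0])$, which the paper leaves implicit, so that $g(0)=\lambda\Phi(0+)/c$ follows without L'H\^opital (only the uniform mass bound, its limit $\mu/\alpha$, and right-continuity of $\Phi(0+)\bar F+Bg$ at $0$ are needed there; the concentration at $t=u$ is superfluous because the support $[0,u]$ itself shrinks to $0$).
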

\begin{proof}
We rewrite \eqref{eq:volterra_zero} in the form $g = \mathcal{T}g$, where
\begin{align*}
    (\mathcal{T}g)(u) &= \frac{\mu}{u^\gamma e^{-\alpha/u}} \int_0^u t^{\gamma-2} e^{-\alpha/t} \bigl( \Phi(0+)\bar F(t) + (Bg)(t) \bigr) \diff t,
    \\
    (Bg)(t) &= \int_0^t g(z)\bar F(t-z) \diff z.
\end{align*}
Note that for any $g_1, g_2 \in C([0,u])$ we have $|(Bg_1)(t) - (Bg_2)(t)| \le t \|g_1-g_2\|_{C[0,u]}$. We estimate the difference of the operators:
\begin{multline*}
    |(\mathcal{T}g_1)(u) - (\mathcal{T}g_2)(u)|
    \le \frac{\mu}{u^\gamma e^{-\alpha/u}} \int_0^u t^{\gamma-2} e^{-\alpha/t} \cdot t \|g_1-g_2\|_{C[0,u]} \diff t =
    \\
    = \frac{\mu \|g_1-g_2\|_{C[0,u]}}{u^\gamma e^{-\alpha/u}} \int_0^u t^{\gamma-1} e^{-\alpha/t} \diff t.
\end{multline*}
The change of variables $s = \alpha/t$ yields the precise asymptotics of the integral as $u \to 0$:
\[
\int_0^u t^{\gamma-1} e^{-\alpha/t} \diff t = \alpha^{\gamma} \int_{\alpha/u}^{\infty} s^{-\gamma-1} e^{-s} \diff s \sim \frac{u^{\gamma+1}}{\alpha} e^{-\alpha/u}.
\]
Consequently, there exists a constant $C_0 > 0$ such that for all sufficiently small $u$ we have
\[
\frac{1}{u^\gamma e^{-\alpha/u}} \int_0^u t^{\gamma-1} e^{-\alpha/t} \diff t \le C_0 u.
\]
Thus, we arrive at the estimate:
\[
\|\mathcal{T}g_1 - \mathcal{T}g_2\|_{C[0,u]} \le \mu C_0 u \|g_1-g_2\|_{C[0,u]}.
\]
By choosing $u_0$ such that $\mu C_0 u_0 < 1$, we ensure that the operator $\mathcal{T}$ is a contraction. By the Banach fixed point theorem, there exists a unique solution $g \in C([0,u_0])$. Passing to the limit $u\to 0$ in \eqref{eq:volterra_zero} and applying L'Hôpital's rule yields $g(0) = \frac{\lambda}{c}\Phi(0+)$.
\end{proof}

Fix the found $u_0>0$ and the values of $g$ on the interval $[0, u_0]$. For $u \ge u_0$, we split the integral in \eqref{eq:volterra_zero} into two parts: from $0$ to $u_0$ and from $u_0$ to $u$. We separate the contribution of $g$ on $[0,u_0]$ and on $[u_0,t]$ in $(Bg)(t)$:
\[
(Bg)(t) = \int_0^{u_0} g(z)\bar F(t-z) \diff z + \int_{u_0}^t g(z)\bar F(t-z) \diff z.
\]
Substituting this into \eqref{eq:volterra_zero}, we obtain for $u \ge u_0$ a linear Volterra integral equation of the second kind:
\begin{equation}\label{eq:volterra_global}
g(u) = f_{u_0}(u) + \int_{u_0}^u K(u,y) g(y) \diff y,
\end{equation}
where the free term $f_{u_0}(u)$ depends only on the already known values of $g$ on $[0, u_0]$:
\begin{multline*}
f_{u_0}(u) = \frac{\mu}{u^\gamma e^{-\alpha/u}} \Biggl( \int_0^{u_0} t^{\gamma-2} e^{-\alpha/t} \Bigl( \Phi(0+)\bar F(t) + \int_0^t g(z)\bar F(t-z) \diff z \Bigr) \diff t +
\\
+ \int_{u_0}^u t^{\gamma-2} e^{-\alpha/t} \int_0^{u_0} g(z)\bar F(t-z) \diff z \diff t \Biggr),
\end{multline*}
and the kernel is given by:
\[
K(u,y) = \frac{\mu}{u^\gamma e^{-\alpha/u}} \int_y^u t^{\gamma-2} e^{-\alpha/t} \bar F(t-y) \diff t.
\]
The function $f_{u_0}(u)$ is continuous on $[u_0,\infty)$, and the kernel $K(u,y)$ is jointly continuous on the set $u_0 \le y \le u$. By \cite[Theorem 2.1.1]{Burton1983}, equation \eqref{eq:volterra_global} has a unique continuous solution $g \in C([u_0,\infty))$. Thus, the solution $g$ is well-defined on the entire half-line $[0,\infty)$.

\begin{lemma}[Uniform Boundedness]\label{lem:boundedness}
Suppose condition \ref{assump:2} holds. Then the continuous solution $g(u)$ is uniformly bounded on $[0,\infty)$.
\end{lemma}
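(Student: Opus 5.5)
The plan is a Gronwall-type bootstrap on the running maximum $M(u) := \max_{0\le s\le u}|g(s)|$. This quantity is finite for every $u$, because $g\in C([0,\infty))$ by Lemma \ref{lem:local} and the extension to $[u_0,\infty)$. The aim is to show that, in the Volterra equation \eqref{eq:volterra_zero}, the convolution term contributes at most a small multiple of $M(u)$ once $u$ is large. The smallness factor will come from the decay $\bar F(y)\le \bbe[\xi^\eps]\,y^{-\eps}$ (Markov's inequality), which is exactly what assumption \ref{assump:2} provides. Without loss of generality I take $\eps<1$; since $\gamma>1$ by \ref{assump:3}, this also gives $\eps<\gamma$.

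\emph{Step 1: bound the convolution term.} For $t\ge 1$ I would estimate
\[
|(Bg)(t)|\le M(t)\int_0^t\bar F(y)\diff y\le M(t)\Bigl(1+\bbe[\xi^\eps]\int_1^t y^{-\eps}\diff y\Bigr)\le C_1 M(t)\,t^{1-\eps}.
\]
For $t<1$ the trivial bound $|(Bg)(t)|\le M(t)\,t$ suffices. Altogether, $|(Bg)(t)|\le C_1 M(u)\,(t^{1-\eps}+t)$ for all $t\le u$. This replaces the estimate that would require $\bbe[\xi]<\infty$, or in \cite{AK2026} $\bbe[\xi^{\gamma-1}]<\infty$.

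\emph{Step 2: insert into \eqref{eq:volterra_zero}.} For $0<t\le u$ we have $e^{\alpha/u}e^{-\alpha/t}\le 1$, so the exponential factors can be dropped. This gives
\[
|g(u)|\le \mu u^{-\gamma}\int_0^u t^{\gamma-2}\bigl(\Phi(0+)+C_1M(u)(t^{1-\eps}+t\,\mathbf 1_{t<1})\bigr)\diff t .
\]
Since $\gamma>1$, each resulting integral is finite and can be computed in closed form. The result is, for $u\ge1$,
\[
|g(u)|\le \frac{\mu\Phi(0+)}{(\gamma-1)u}+C_2M(u)\bigl(u^{-\eps}+u^{-\gamma}\bigr)\le \frac{A}{u}+C_3u^{-\eps}M(u).
\]
Here I used $t^{\gamma-2}t^{1-\eps}=t^{\gamma-1-\eps}$, whose integral over $[0,u]$ is $u^{\gamma-\eps}/(\gamma-\eps)$.

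\emph{Step 3: absorb.} Choose $U\ge1$ with $C_3U^{-\eps}\le 1/2$. For $u\ge U$, Step 2 applied at each $s\in[U,u]$, together with the monotonicity of $M$, gives $\sup_{[U,u]}|g|\le A/U+\tfrac12 M(u)$. Hence
\[
M(u)\le M(U)+A/U+\tfrac12M(u), \qquad\text{so}\qquad M(u)\le 2\bigl(M(U)+A/U\bigr).
\]
This bound is independent of $u$, which proves uniform boundedness.

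The main obstacle is Step 1. The whole argument rests on the fact that $\int_0^t\bar F$ grows strictly sublinearly under only an $\eps$-moment, and on the fact that the power weight $t^{\gamma-2}$ integrates this growth into a gain of $u^{-\eps}$ rather than a loss. The latter uses $\gamma>\eps$, so both assumptions \ref{assump:2} and \ref{assump:3} enter at this point. Also, $\eps$ must be reduced to below $1$ first, so that the integral of $y^{-\eps}$ over $[1,t]$ grows like $t^{1-\eps}$ and not logarithmically or boundedly; any of these cases would still work, but the algebra is cleanest this way.
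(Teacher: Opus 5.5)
Your proof is correct and follows essentially the same route as the paper: the running maximum $M(u)$, the Markov bound $\int_0^t \bar F \le C\,t^{1-\varepsilon}$, a coefficient of $M(u)$ in the Volterra estimate that decays like $u^{-\varepsilon}$, and absorption of $\tfrac12 M(u)$. The differences are minor. You drop the exponential factors via $e^{\alpha/u-\alpha/t}\le 1$ and integrate the powers explicitly instead of using L'H\^opital, and this also gives you the explicit bound $A/u$ for the free term, which the paper only asserts to be bounded.
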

\begin{proof}
Define the non-decreasing function $M(u) := \max_{x \in [0,u]} |g(x)|$. From the definition of the convolution operator, we have:
\[
|(Bg)(t)| \le \int_0^t |g(t-y)| \bar F(y) \diff y \le M(u) \int_0^t \bar F(z) \diff z, \quad \text{for all } t \le u.
\]
Denote $J(t) := \int_0^t \bar F(z) \diff z$. Taking absolute values in \eqref{eq:volterra_zero}, we obtain:
\begin{multline*}
|g(u)| \le \frac{\mu}{u^\gamma e^{-\alpha/u}} \int_0^u t^{\gamma-2} e^{-\alpha/t} \Phi(0+)\bar F(t) \diff t +
\\
+ M(u) \frac{\mu}{u^\gamma e^{-\alpha/u}} \int_0^u t^{\gamma-2} e^{-\alpha/t} J(t) \diff t =: \tilde f(u) + M(u) I_2(u),
\end{multline*}
where $\tilde f(u)$ is a bounded function (its limit as $u\to\infty$ is zero, and as $u\to 0$ it equals $\lambda\Phi(0+)/c$). Hence, there exists a constant $K_f$ such that $\tilde f(u) \le K_f$ for all $u\ge0$.

By condition \ref{assump:2}, there exists a moment $\mathbb{E}[\xi^\varepsilon]<\infty$ for some $\varepsilon>0$. If necessary, we reduce $\varepsilon$ so that $\varepsilon\in(0,1)$ and $\gamma-1-\varepsilon>0$. By Markov's inequality, $\bar F(z) \le C_0 z^{-\varepsilon}$ for all $z>0$, where $C_0 = \mathbb{E}[\xi^\varepsilon]$. We estimate $J(t)$:
\begin{multline*}
J(t) = \int_0^1 \bar F(z) \diff z + \int_1^t \bar F(z) \diff z \le 1 + C_0 \int_1^t z^{-\varepsilon} \diff z =
\\
= 1 + \frac{C_0}{1-\varepsilon}(t^{1-\varepsilon}-1) \le C_1 t^{1-\varepsilon}
\end{multline*}
for all $t \ge 1$, where $C_1 = \max\bigl(1, \frac{C_0}{1-\varepsilon}\bigr)$. For $t \in[0,1)$ we have $J(t) \le \int_0^t 1 \diff z = t \le t^{1-\varepsilon}$ (since $1-\varepsilon \in (0,1)$). Thus, the estimate $J(t) \le C_1 t^{1-\varepsilon}$ holds for all $t \ge 0$. Substituting it into $I_2(u)$, we obtain:
\[
I_2(u) \le \frac{\mu C_1}{u^\gamma e^{-\alpha/u}} \int_0^u t^{\gamma-1-\varepsilon} e^{-\alpha/t} \diff t.
\]
We find the asymptotics as $u\to\infty$ using L'Hôpital's rule:
\begin{multline*}
\lim_{u\to\infty} \frac{\int_0^u t^{\gamma-1-\varepsilon} e^{-\alpha/t} \diff t}{u^\gamma e^{-\alpha/u}}
= \lim_{u\to\infty} \frac{u^{\gamma-1-\varepsilon} e^{-\alpha/u}}{(\gamma u^{\gamma-1} + \alpha u^{\gamma-2}) e^{-\alpha/u}}
=
\\
= \lim_{u\to\infty} \frac{u^{-\varepsilon}}{\gamma + \alpha u^{-1}} = 0.
\end{multline*}
Consequently, there exists $u_1 \ge u_0$ such that $I_2(u) \le 1/2$ for all $u \ge u_1$. Then for $u \ge u_1$:
\[
|g(u)| \le K_f + \frac{1}{2} M(u).
\]
Let $x^* \in [0,u]$ be a point where $|g|$ attains its maximum $M(u)$. If $x^* \le u_1$, then $M(u) = M(u_1)$. If $x^* > u_1$, applying the inequality to $x^*$ yields
\[
M(u) = |g(x^*)| \le K_f + \frac{1}{2} M(x^*) \le K_f + \frac{1}{2} M(u),
\]
whence $M(u) \le 2K_f$. Thus, for all $u \ge 0$ we have $M(u) \le \max(M(u_1), 2K_f)$, which means that $g$ is uniformly bounded on $[0,\infty)$.
\end{proof}

We now prove that $g$ is not only bounded but also absolutely integrable on $\mathbb{R}_+$. The key ingredient is the iterative improvement of the asymptotic estimate, based on the power-law decay of the tail $\bar F$.

\begin{lemma}[Absolute Integrability]\label{lem:iis}
Suppose conditions \ref{assump:2} and \ref{assump:3} hold, with $\varepsilon \in (0,1)$ chosen such that $\gamma-1-\varepsilon > 0$. Then $g(u) = O(u^{-1-\varepsilon})$ as $u \to \infty$. In particular, $g \in L^1(\mathbb{R}_+)$.
\end{lemma}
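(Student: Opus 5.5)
We will bootstrap on the decay exponent of $g$ in \eqref{eq:volterra_zero}, starting from Lemma \ref{lem:boundedness}. Suppose $|g(u)|\le C(1+u)^{-\beta}$ for some $\beta\in[0,1+\varepsilon)$; the case $\beta=0$ is Lemma \ref{lem:boundedness}. We will show that $|g(u)|\le C'(1+u)^{-\min(\beta+\varepsilon,\,1+\varepsilon)}$, up to an arbitrarily small loss in the exponent at borderline steps. After finitely many steps this gives $g(u)=O(u^{-1-\varepsilon})$, and since $1+\varepsilon>1$ we get $g\in L^1(\mathbb{R}_+)$.

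\emph{Step 1: bound on the convolution.} We use $\bar F(z)\le \min(1,C_0z^{-\varepsilon})$ (Markov) and split $(Bg)(t)=\int_0^t g(y)\bar F(t-y)\diff y$ at $y=t/2$.
\begin{itemize}
\item On $[0,t/2]$ we have $\bar F(t-y)\le C_0 2^{\varepsilon}t^{-\varepsilon}$. The remaining integral is $\int_0^{t/2}|g|\le C\,t^{1-\beta}$ if $\beta<1$, $\le C\log t$ if $\beta=1$, and $\le C$ if $\beta>1$.
\item On $[t/2,t]$ we have $|g(y)|\le C t^{-\beta}$. The remaining integral is $\int_0^{t/2}\bar F\le C_1 t^{1-\varepsilon}$, as computed for $J$ in Lemma \ref{lem:boundedness}.
\end{itemize}
Hence $|(Bg)(t)|\lesssim t^{1-\beta-\varepsilon}$ when $\beta<1$, and $\lesssim t^{-\varepsilon}\log t+t^{1-\beta-\varepsilon}$ when $\beta\ge1$. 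In all cases $|(Bg)(t)|\lesssim t^{\max(1-\beta,\,-\delta)-\varepsilon}$ for large $t$, with $\delta>0$ as small as we like. The free term obeys $\Phi(0+)\bar F(t)\le C t^{-\varepsilon}$.

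\emph{Step 2: transfer through the integrating factor.} For $\rho$ with $\gamma-1+\rho>0$, the same L'Hôpital computation as in Lemma \ref{lem:boundedness} gives
\[
\frac{1}{u^\gamma e^{-\alpha/u}}\int_0^u t^{\gamma-2+\rho}e^{-\alpha/t}\diff t\sim \frac{u^{\rho-1}}{\gamma-1+\rho},\qquad u\to\infty .
\]
We apply this with $\rho=1-\beta-\varepsilon$ for the convolution part and $\rho=-\varepsilon$ for the free term. The condition $\gamma-1-\varepsilon>0$ guarantees $\gamma-1+\rho>0$ in every case we need, since $\rho\ge-\varepsilon-\delta$ once $\delta$ is small. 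The free term then contributes $O(u^{-1-\varepsilon})$ and the convolution contributes $O(u^{-\beta-\varepsilon})$, or $O(u^{-1-\varepsilon+\delta})$ once $\beta\ge1$. Near $0$ and on compacts, boundedness of $g$ controls everything.

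\emph{Main obstacle.} The delicate point is the final step, where the convolution bound saturates: with $\beta>1$, the piece $\int_0^{t/2}|g(y)|\bar F(t-y)\diff y\sim \|g\|_{L^1}\bar F(t)$ only gives $t^{-\varepsilon}$. This is still enough, because after the integrating factor it yields exactly $u^{-1-\varepsilon}$, the target rate. So the iteration should run $\beta_k=k\varepsilon$ until $\beta_k>1$, avoiding $\beta_k=1$ exactly by shrinking $\varepsilon$ slightly if necessary. One last step then gives $\beta=1+\varepsilon$, with no logarithmic loss because $g\in L^1$ at that stage.
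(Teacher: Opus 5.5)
Your argument is correct. It uses the same bootstrap as the paper: split $(Bg)(t)$ at $t/2$, gain $\varepsilon$ in the decay exponent per step, and transfer through the integrating factor via L'H\^opital. It is more careful at exactly the point where the paper's proof breaks.

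The paper bounds $\int_0^{t/2}|g(t-y)|\bar F(y)\diff y+\int_{t/2}^t|g(y)|\bar F(t-y)\diff y$. The substitution $y\mapsto t-y$ shows these two integrals coincide, so the piece $\int_0^{t/2}|g(y)|\bar F(t-y)\diff y$ (your first bullet) is never estimated. For $\delta<1$ this is harmless, because that piece is also $O(t^{1-\delta-\varepsilon})$. In the paper's final step from $\delta=1$, however, the claimed bound $|(Bg)(t)|\le K_3t^{-\varepsilon}$ does not follow from $|g(u)|\le C(1+u)^{-1}$. The omitted piece is of order $t^{-\varepsilon}\log t$, so the transfer gives only $O(u^{-1-\varepsilon}\log u)$: enough for $L^1$, but not the stated rate.

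Two structural changes in your version repair this:
\begin{itemize}
\item You treat the free term $\Phi(0+)\bar F$ separately, so the induction can run past $\beta=1$. The paper caps $\delta\le 1$ precisely so it can absorb $\bar F(t)$ into $t^{1-\delta-\varepsilon}$.
\item You take the last step only once $\beta>1$. Then $g\in L^1$ gives $\int_0^{t/2}|g(y)|\bar F(t-y)\diff y\le\|g\|_{L^1}\bar F(t/2)=O(t^{-\varepsilon})$, which yields the clean $O(u^{-1-\varepsilon})$.
\end{itemize}

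Two small corrections.
\begin{itemize}
\item In the summary of Step~1 the exponent should be $\max(1-\beta,\delta)-\varepsilon$, not $\max(1-\beta,-\delta)-\varepsilon$. The logarithm costs a factor $t^{\delta}$. For $\beta>1$, the term $\|g\|_{L^1}\bar F(t/2)$ cannot be shown to decay faster than $t^{-\varepsilon}$ from the available information. Your Step~2 conclusions already use the correct version. With it you have $\rho\ge-\varepsilon$, so $\gamma-1+\rho>0$ follows directly from $\gamma-1-\varepsilon>0$.
\item Rather than shrinking $\varepsilon$ itself, simply accept the $\delta$-loss at $\beta=1$, which already gives an exponent $1+\varepsilon-\delta>1$ for $\delta<\varepsilon$. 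Alternatively, weaken only an intermediate exponent. The Markov bound $\bar F(z)\le C_0z^{-\varepsilon}$ used in the final step must keep the original $\varepsilon$; otherwise you would only conclude $O(u^{-1-\varepsilon'})$.
\end{itemize}
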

\begin{proof}
From Lemma \ref{lem:boundedness}, we know that $|g(u)| \le C_0$ for all $u\ge0$. Assume that for some $\delta \in [0,1]$ we have
\[
|g(u)| \le C (1+u)^{-\delta} \qquad \text{for all } u \ge 0.
\]
For $\delta = 0$ this holds with $C = C_0$. We show that there then exists a constant $C'$, independent of $u$, such that $|g(u)| \le C' (1+u)^{-(\delta+\varepsilon)}$ for all sufficiently large $u$.

We estimate the convolution $(Bg)(t)$ for large $t$. We split the integral:
\[
|(Bg)(t)| \le \int_0^{t/2} |g(t-y)| \bar F(y) \diff y + \int_{t/2}^t |g(y)| \bar F(t-y) \diff y.
\]
For the first integral, $y \le t/2$, so $t-y \ge t/2$ and $|g(t-y)| \le C (1+t/2)^{-\delta} \le C_1 t^{-\delta}$. Given that $\int_0^{t/2} \bar F(y) \diff y \le J(t) \le C_2 t^{1-\varepsilon}$ (see the proof of Lemma \ref{lem:boundedness}), the first integral does not exceed $C_3 t^{1-\delta-\varepsilon}$. For the second integral: for $y \ge t/2$ we have $|g(y)| \le C_1 t^{-\delta}$, and $\int_{t/2}^t \bar F(t-y) \diff y = \int_0^{t/2} \bar F(z) \diff z \le C_2 t^{1-\varepsilon}$. Consequently, the second integral is also bounded by $C_3 t^{1-\delta-\varepsilon}$. Thus, there exists a constant $K_1$ such that for all sufficiently large $t$ we have
\[
|(Bg)(t)| \le K_1 t^{1-\delta-\varepsilon}.
\]

Moreover, for $t \ge 1$ and since $1-\delta \ge 0$, we have $\bar F(t) \le C_0 t^{-\varepsilon} \le C_0 t^{1-\delta-\varepsilon}$. Substituting these estimates into \eqref{eq:volterra_zero}, we obtain for large $u$:
\begin{multline*}
|g(u)| \le \frac{\mu}{u^\gamma e^{-\alpha/u}} \int_0^u t^{\gamma-2} e^{-\alpha/t} \bigl( \Phi(0+)\bar F(t) + |(Bg)(t)| \bigr) \diff t \le
\\
\le \frac{\mu K_2}{u^\gamma e^{-\alpha/u}} \int_0^u t^{\gamma-2} e^{-\alpha/t} t^{1-\delta-\varepsilon} \diff t
= \frac{\mu K_2}{u^\gamma e^{-\alpha/u}} \int_0^u t^{\gamma-1-\delta-\varepsilon} e^{-\alpha/t} \diff t.
\end{multline*}
The asymptotics of the last integral as $u\to\infty$ is given by:
\[
\int_0^u t^{\gamma-1-\delta-\varepsilon} e^{-\alpha/t} \diff t \sim \frac{u^{\gamma-\delta-\varepsilon}}{\gamma-\delta-\varepsilon} e^{-\alpha/u},
\]
which is easily derived using L'Hôpital's rule. Taking into account the cancellation of factors, we get:
\[
|g(u)| \le \frac{\mu K_2}{\gamma-\delta-\varepsilon} u^{-(\delta+\varepsilon)} + o(u^{-(\delta+\varepsilon)}).
\]
Hence, for all sufficiently large $u$ we have $|g(u)| \le C' u^{-(\delta+\varepsilon)}$, as required.

Starting from $\delta = 0$, after $N = \lceil 1/\varepsilon \rceil$ steps we reach $\delta \ge 1$. We apply one more step, starting from $\delta = 1$. In this case $|g(u)| \le C u^{-1}$ and, as shown above, $|(Bg)(t)| \le K_3 t^{-\varepsilon}$. Then
\begin{multline*}
|g(u)| \le \frac{\mu}{u^\gamma e^{-\alpha/u}} \int_0^u t^{\gamma-2} e^{-\alpha/t} \bigl( \Phi(0+)\bar F(t) + K_3 t^{-\varepsilon} \bigr) \diff t
\le
\\
\le \frac{\mu K_4}{u^\gamma e^{-\alpha/u}} \int_0^u t^{\gamma-2-\varepsilon} e^{-\alpha/t} \diff t.
\end{multline*}
The asymptotics yields:
\[
|g(u)| \le \frac{\mu K_4}{\gamma-1-\varepsilon} u^{-1-\varepsilon} + o(u^{-1-\varepsilon})
\]
for all sufficiently large $u$. Thus, $g(u) = O(u^{-1-\varepsilon})$, which, since $\varepsilon>0$, implies $g \in L^1(\mathbb{R}_+)$.
\end{proof}

Finally, we note that from the integral representation \eqref{eq:volterra_zero}, the strict positivity of $\Phi(0+)$, and the non-negativity of $\bar F(t)$, it follows that $g(u) > 0$ for all $u>0$. Indeed, on the small interval $[0,u_0]$ the solution is obtained as the limit of Picard iterations, which preserve strict positivity; for $u \ge u_0$, equation \eqref{eq:volterra_global} with a strictly positive free term and a non-negative kernel yields a positive solution. Consequently, $\Phi'(u) = g(u) > 0$, so that $\Phi$ is strictly increasing, which is fully consistent with the probabilistic meaning of the problem.

\section{Solution Verification and Asymptotics of the Ruin Probability}
\label{sec:identification}

Let $g_1(u)$ denote the solution to the integral equation \eqref{eq:volterra_zero} corresponding to the value $\Phi(0+) = 1$. Due to the linearity of the convolution operator $B$ and the homogeneity of the right-hand side with respect to $\Phi(0+)$, the general solution takes the form $g(u) = \Phi(0+) g_1(u)$. 

We construct a candidate function for the survival probability:
\begin{equation}\label{eq:G_candidate}
G(u) := \Phi(0+) + \int_0^u g(y) \diff y = \Phi(0+) \left( 1 + \int_0^u g_1(y) \diff y \right).
\end{equation}
By Lemma \ref{lem:iis}, the integral $I_1 := \int_0^\infty g_1(y) \diff y$ is finite. The normalization condition at infinity, $G(\infty)=1$, uniquely determines the initial value:
\begin{equation}\label{eq:phi_zero}
\Phi(0+) = \frac{1}{1 + I_1} \in (0,1).
\end{equation}
The strict positivity of $\Phi(0+)$ reflects the fact that in the classical non-life model with premium rate $c>0$, the company cannot be ruined instantaneously even with zero initial capital.

\begin{proposition}
The function \eqref{eq:G_candidate} coincides with the survival probability $\Phi$.
\end{proposition}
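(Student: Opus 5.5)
We will use a verification (martingale) argument. The function $G$ is extended by $G(u)=0$ for $u<0$, and we apply Itô's formula to $G(X^u_t)$ up to the ruin time.

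\textbf{Regularity of $G$.} By construction $G\in C([0,\infty))\cap C^1((0,\infty))$, with $G'=g$. By \eqref{eq:derivative_form}, $g$ is locally absolutely continuous on $(0,\infty)$ and satisfies \eqref{eq:ide_g} almost everywhere. Reversing the integration by parts of \cite[Section 3]{Grandits2004}, $G$ then satisfies the IDE \eqref{eq:ide} almost everywhere on $(0,\infty)$. Moreover $G$ is increasing, takes values in $[\Phi(0+),1]$, and satisfies $G(\infty)=1$ by \eqref{eq:phi_zero}.

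\textbf{Martingale property.} Since $G'$ is only absolutely continuous, we use the Itô formula for $C^1$ functions with absolutely continuous derivative. This is legitimate because the diffusion coefficient $\kappa\sigma X_t$ is nondegenerate on $(0,\infty)$: by the occupation-time formula, the exceptional null set where the IDE fails is visited for zero Lebesgue time. Let $\tau_n:=\tau^u\wedge\inf\{t: X^u_t\ge n\}$. Between jumps, Itô's formula produces a drift equal to the left side of \eqref{eq:ide}. At jumps, the compensator of the jump part contributes $\lambda\int (G(X_{t-}-y)-G(X_{t-}))\,\diff F(y)$. The IDE cancels these two contributions, so $G(X^u_{t\wedge\tau_n})$ is a local martingale. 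It is bounded by $1$, hence a true martingale, and we get $G(u)=\bbe\,G(X^u_{t\wedge\tau_n})$.

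\textbf{Passage to the limit.} We let $t\to\infty$ and then $n\to\infty$, using bounded convergence. We need three facts.
\begin{enumerate}[label=(\alph*)]
\item On $\{\tau^u<\infty\}$ we have $X^u_{\tau^u}\le 0$. Ruin can only occur by a jump strictly below $0$, or at $0$, because between jumps the process has positive drift $c$ near $0$ and $X$ solves a linear SDE. Hence $G(X^u_{\tau^u})=0$. Creeping to exactly $0$ has probability zero, since near $0$ the drift $c>0$ dominates and the diffusion coefficient vanishes. Together with $F(0)=0$ from \ref{assump:1}, this must be checked carefully, because $G(0)=\Phi(0+)>0$.
\item On $\{\tau^u=\infty\}$ we have $X^u_t\to\infty$ a.s., which gives $G(X^u_t)\to1$. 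This is the standard fact for $\gamma>1$ (cf. \cite{Paulsen1993}). The solution of the linear SDE is $X_t=e^{\kappa\sigma W_t+(\gamma-1)\kappa^2\sigma^2t/2}\bigl(u+\int_0^t e^{-(\cdot)}\diff P_s\bigr)$. The discounted integral converges a.s. under \ref{assump:2}, because $\log$-moments suffice.
\item The exits through level $n$ before ruin must be handled. On $\{\tau_n<\tau^u\}$ the process is at level at least $n$, so $G\ge G(n)\to1$. These events are therefore absorbed into the survival event in the limit, using $\tau_n\to\tau^u$.
\end{enumerate}
Combining, $G(u)=\bbp(\tau^u=\infty)\cdot1+\bbp(\tau^u<\infty)\cdot0=\Phi(u)$.

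\textbf{Main obstacle.} I expect the main obstacle to be the limit step on $\{\tau^u=\infty\}$, that is, showing $X_t\to\infty$ rather than merely staying positive. I would argue this through the explicit representation above and the a.s. convergence of the perpetuity-type integral. The boundary issue at $0$ in fact (a) is the secondary delicate point.
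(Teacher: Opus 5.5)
Your proposal is correct and is essentially the paper's own verification argument: the generalized It\^o formula for the zero-extended $G$ up to $\tau^u$, a vanishing drift by the IDE, boundedness giving a true martingale, and $t\to\infty$ using $G=0$ at ruin and $X^u_t\to\infty$ on survival (which the paper quotes from Paulsen's Theorem 3.1 rather than re-deriving); your localization at level $n$ is harmless but not needed. Your boundary worry in (a) disappears if, as the paper does, you set the extension equal to $0$ on all of $(-\infty,0]$: this leaves the It\^o computation unchanged, since $X^u$ lives in $(0,\infty)$ before $\tau^u$ and $G(0)$ enters the compensator only on a Lebesgue-null set of times, whereas with your convention $G(0)=\Phi(0+)$ you would also have to rule out a claim landing the process exactly at $0$, not just creeping.
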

\begin{proof}
We extend the function to the entire real line by setting $\widetilde G(x) := G(x)$ for $x > 0$ and $\widetilde G(x) := 0$ for $x \le 0$. By construction, $\widetilde G$ is bounded, its restriction to $(0,\infty)$ belongs to $C^1((0,\infty))$, and its derivative is locally absolutely continuous on the positive half-line. We apply the generalized Itô formula to the process $\widetilde G(X_{t\wedge\tau^u}^u)$:
\begin{equation*}
\widetilde G(X_{t\wedge\tau^u}^u) = \widetilde G(u) + \int_0^{t\wedge\tau^u} (\cL \widetilde G)(X_s^u) \diff s + M_{t\wedge\tau^u},
\end{equation*}
where $M_t$ is a local martingale and $\cL \widetilde G$ is the integro-differential operator. Up to the stopping time $\tau^u$, the process remains in the domain of strictly positive values, where the operator coincides with the left-hand side of equation \eqref{eq:ide_g_raw} minus the right-hand side. Since $G$ is constructed from a solution to \eqref{eq:ide_g_raw}, the time integral vanishes identically. The process $M_{t\wedge\tau^u}$ is bounded (since $0 \le \widetilde G \le 1$), hence it is a martingale and uniformly integrable. Therefore, $\mathbb{E}[\widetilde G(X_{t\wedge\tau^u}^u)] = \widetilde G(u) = G(u)$.

Letting $t\to\infty$, on the set $\{\tau^u<\infty\}$ ruin occurs, meaning the process reaches the non-positive half-line: $X_{\tau^u}^u \le 0$. By definition, $\widetilde G(X_{\tau^u}^u)=0$. On the set $\{\tau^u=\infty\}$, the process survives. As is well known (see \cite[Theorem 3.1]{Paulsen1993}), the condition $\gamma>1$ ensures that the drift dominates the diffusion, which implies $X_t^u \to \infty$ almost surely as $t\to\infty$. Consequently, $\widetilde G(X_t^u) \to G(\infty)=1$. By the Lebesgue dominated convergence theorem, we obtain:
\begin{equation*}
G(u) = 0 \cdot \mathbb{P}(\tau^u<\infty) + 1 \cdot \mathbb{P}(\tau^u=\infty) = \Phi(u).
\end{equation*}
\end{proof}

We now turn to the analysis of the behavior of $\Psi(u) = 1 - \Phi(u)$ as $u \to \infty$. By L'Hôpital's rule,
\begin{equation*}
\lim_{u\to\infty} u^{\gamma-1} \Psi(u) = \frac{1}{\gamma-1} \lim_{u\to\infty} u^\gamma g(u).
\end{equation*}
From the integral representation \eqref{eq:volterra_zero}, we have:
\begin{equation}\label{eq:limit_integral}
u^\gamma e^{-\alpha/u} g(u) = \mu \int_0^u t^{\gamma-2} e^{-\alpha/t} \big( \Phi(0+) \bar{F}(t) + (Bg)(t) \big) \diff t.
\end{equation}
Since the integrand is non-negative, the RHS of \eqref{eq:limit_integral} is a non-decreasing function of the upper limit. Because the factor $e^{-\alpha/u} \to 1$ as $u \to \infty$, the limit $L := \lim_{u\to\infty} u^\gamma g(u)$ is guaranteed to exist (finite or infinite) and equals the value of the corresponding improper integral over the half-line.

It is known (see, e.g., \cite{Frolova2002,Paulsen1993}) that the condition $\mathbb{E}[\xi^{\gamma-1}] < \infty$ is necessary and sufficient for the finiteness of the limit $L$. Indeed, the integrability of the term $t^{\gamma-2} \bar{F}(t)$ is equivalent to the convergence of the moment of order $\gamma-1$, and the finiteness of the contribution from the convolution operator $(Bg)(t)$ under this condition follows from standard estimates. 

Consequently, under the condition $\mathbb{E}[\xi^{\gamma-1}] < \infty$, the ruin probability exhibits power-law asymptotics $\Psi(u) \sim C_\infty u^{-\gamma+1}$, where the constant $C_\infty = L/(\gamma-1)$ can be written explicitly using \eqref{eq:phi_zero}:
\begin{equation}\label{eq:c_infty}
C_\infty = \frac{1}{\gamma-1}\, \frac{\mu}{1+\int_0^\infty g_1(y)\diff y} \int_0^\infty t^{\gamma-2} e^{-\alpha/t} \big( \bar{F}(t) + (Bg_1)(t) \big) \diff t.
\end{equation}
Note that the exponential factor $e^{-\alpha/t}$ inside the integral plays a crucial role, ensuring convergence at zero for all admissible values of $\gamma > 1$.

If, however, $\mathbb{E}[\xi^{\gamma-1}] = \infty$, then the integral in \eqref{eq:limit_integral} diverges, which implies $\lim_{u\to\infty} u^{\gamma-1}\Psi(u) = \infty$. In this case, the asymptotics of the ruin probability is no longer purely power-law and is entirely determined by the heavy-tail properties of $\bar{F}(u)$ (see \cite{Grandits2004}).

\section{Smoothness of the Solution}

The $C^2$-smoothness of the survival probability $\Phi$ is established below, completing the proof of Theorem~\ref{thm:main}. Traditionally, this step has required rather restrictive assumptions in the literature: for instance, \cite{KP2022} required the densities of positive and negative jumps to be twice continuously differentiable on $(0,\infty)$, with their first and second derivatives belonging to $L^1(\mathbb{R}_+)$. In the recent work \cite{AK2026}, smoothness was obtained under the stringent moment condition $\mathbb{E}[\xi^{\gamma-1}] < \infty$.

Note that the $C^2$-smoothness can also be justified using the theory of viscosity solutions (see, e.g., \cite{BK2015}). For the annuity model this was done in the recent work \cite{Promyslov2026_Viscosity} by means of local elliptic regularity. In contrast to that approach, in the present paper for the non-life insurance model the $C^2$-smoothness follows directly from the structure of the first-order integral equation, without invoking the viscosity apparatus and without additional moment conditions.

From Section \ref{sec:existence}, we know that $g(u) = \Phi'(u)$ is a continuous and bounded function on $(0,\infty)$ satisfying the Volterra integral equation \eqref{eq:volterra_zero}. This equation can be rewritten as:
\begin{equation}\label{eq:H_integral}
u^\gamma e^{-\alpha/u} g(u) = \mu \int_0^u t^{\gamma-2} e^{-\alpha/t} \big( \Phi(0+) \bar{F}(t) + (Bg)(t) \big) \diff t.
\end{equation}
Denote the left-hand side by $H(u) := u^\gamma e^{-\alpha/u} g(u)$. Since $g$ is continuous and $\bar{F}$ is bounded and locally integrable, the convolution $(Bg)(t) = \int_0^t g(t-y)\bar{F}(y)\diff y$ defines a continuous function of $t$. The tail function $\bar{F}(t)$, being monotone, is continuous almost everywhere and locally integrable. 

Consequently, the integrand in \eqref{eq:H_integral} is locally integrable on $(0,\infty)$ and bounded on any compact set separated from zero. By the properties of the Lebesgue integral with a variable upper limit, it immediately follows that the function $H(u)$ is locally absolutely continuous. Since the factor $u^{-\gamma} e^{\alpha/u}$ is infinitely differentiable on $(0,\infty)$, the function itself $g(u) = u^{-\gamma} e^{\alpha/u} H(u)$ is also locally absolutely continuous ($g \in W^{1,1}_{\text{loc}}((0,\infty))$). Hence, its weak derivative $g'(u)$ exists almost everywhere.

Differentiating identity \eqref{eq:H_integral} almost everywhere necessarily returns us to the original IDE \eqref{eq:ide_g}. Solving for $g'(u)$ by dividing the equation by $u^2$ yields:
\begin{equation}\label{eq:g_prime}
g'(u) = -\frac{\gamma u + \alpha}{u^2}\, g(u) + \frac{\mu}{u^2}\int_0^u g(u-y)\bar{F}(y)\diff y + \frac{\mu \Phi(0+)}{u^2}\,\bar{F}(u) \quad \text{a.e. on } u>0.
\end{equation}

Let us analyze the terms on the right-hand side of \eqref{eq:g_prime}. The first term is continuous because $g \in C((0,\infty))$. The second term (the convolution of the continuous function $g$ and the bounded integrable function $\bar{F}$) also defines a continuous function. 

Consequently, the only possible source of discontinuities in the weak derivative $g'(u)$ is the tail function $\bar{F}(u)$ appearing in the third term. This leads to the following exhaustive classification:
\begin{itemize}
    \item If the claim size distribution function $F$ is continuous on $(0,\infty)$, then $\bar{F}$ is also continuous. In this case, the right-hand side of \eqref{eq:g_prime} is entirely continuous. Since a locally absolutely continuous function whose weak derivative admits a continuous representative is continuously differentiable, we have $g' \in C((0,\infty))$. This implies that $\Phi \in C^2((0,\infty))$, and the original IDE \eqref{eq:ide} holds in the strict classical sense for all $u>0$.
    \item If the distribution $F$ contains atoms, then $\bar{F}$ has a countable number of jump discontinuities. As rigorously shown above, $g \in W^{1,1}_{\text{loc}}((0,\infty))$, meaning that the survival probability $\Phi$ possesses a locally absolutely continuous first derivative, and the IDE \eqref{eq:ide} is satisfied almost everywhere with respect to the Lebesgue measure.
\end{itemize}

This completes the proof of part (i) of Theorem \ref{thm:main}.

In conclusion, we note that establishing $C^2$-smoothness required only the condition $\mathbb{E}[\xi^\varepsilon] < \infty$, which guarantees the existence and boundedness of the continuous function $g$, together with the continuity of $F$. The convergence of higher-order moments, in particular $\mathbb{E}[\xi^{\gamma-1}] < \infty$, does not enter at any stage of the smoothness analysis. This rigorously confirms the superfluity of the condition imposed in \cite{AK2026} to justify a classical solution.

\section*{Competing interests}
The author declares no competing interests.


\end{document}